\documentclass[11pt]{amsart}
\usepackage[a4paper,top=28mm,bottom=25mm,left=28mm,right=28mm]{geometry}
\usepackage{amsmath,amssymb,amsthm,mathtools,mathrsfs,enumitem,hyperref}
\hypersetup{colorlinks=true,urlcolor=blue,citecolor=blue,linkcolor=blue}
\allowdisplaybreaks
\newtheorem{theorem}{Theorem}[section]
\newtheorem{proposition}[theorem]{Proposition}
\newtheorem{lemma}[theorem]{Lemma}
\newtheorem{corollary}[theorem]{Corollary}
\theoremstyle{remark}
\newtheorem{remark}[theorem]{Remark}
\newtheorem{example}[theorem]{Example}
\newcommand{\C}{\mathrm C}
\newcommand{\M}{\mathcal M}
\newcommand{\E}{\mathcal E}
\newcommand{\Prob}{\mathcal P}
\newcommand{\TV}{\mathrm{TV}}
\newcommand{\dist}{\operatorname{dist}}
\title[Sharp norm approximation of covering Markov operators]{Sharp Norm Approximation of Covering Markov Operators}
\author{H\"ulya Duru}
\author{Serkan \.{I}lter}
\author{Ayg\"ul Bilgin}
\thanks{Author affiliations: H\"ulya Duru and Serkan \.{I}lter, Department of Mathematics, Faculty of Science, Istanbul University, Vezneciler, 34134 Istanbul, T\"urkiye; Ayg\"ul Bilgin, Institute of Science, Istanbul University, Istanbul, T\"urkiye. E-mail addresses: H\"ulya Duru (corresponding author), \texttt{hduru@istanbul.edu.tr}; Serkan \.{I}lter, \texttt{ilters@istanbul.edu.tr}; Ayg\"ul Bilgin, \texttt{aygulbilgin34@gmail.com}. ORCID iDs: H\"ulya Duru, 0000-0003-0689-460X; Serkan \.{I}lter, 0000-0002-7847-5124.}
\date{}
\subjclass[2020]{Primary 47B65, 47B38; Secondary 46B42, 05C10}
\keywords{Markov operator, extreme point, operator norm, covering graph, convex approximation, equal weights}
\begin{document}
\begin{abstract}
We study the operator-norm approximation of Markov operators associated with finite graph coverings. Let $q:Y\to X$ be a connected covering of degree $d\ge2$ between finite connected graphs, and let $A_q:\C(Y)\to\C(X)$ be its fiber-averaging operator. We show that $A_q$ is not a finite convex combination of extreme Markov operators, although such combinations approximate $A_q$ arbitrarily well in operator norm.

We determine the exact distances from $A_q$ to convex combinations of at most $m$ extreme Markov operators, with arbitrary or equal convex weights. If $m=kd+r$, where $k\ge0$ and $1\le r\le d$, these distances are
\[
\frac{2(d-r+1)}{d[k(d-r+1)+1]}
\quad\text{and}\quad
\frac{2r(d-r+1)}{dm},
\]
respectively. Both distances are attained and depend only on the covering degree $d$ and the number of terms $m$. We also determine when equal weights are optimal and quantify the additional approximation error when they are not. Our proofs use total-variation estimates and continuous rearrangements of labelled weighted atoms. In particular, the unrestricted errors have a common scaled limit, while the equal-weight errors exhibit residue-dependent asymptotic behavior.
\end{abstract}
\maketitle

\section{Introduction}
We study the approximation of Markov operators by finite convex combinations of extreme Markov operators. We ask how small the operator-norm error can be when we use at most $m$ extreme operators. We answer this question for fiber-averaging operators associated with finite graph coverings. We compare the optimal errors obtained with arbitrary and equal convex weights.

Let $X$ and $Y$ be compact Hausdorff spaces. We write $\C(X)$ and $\C(Y)$ for the real Banach lattices of continuous functions with the supremum norm. We also write $\C(X,Y)$ for the set of continuous maps from $X$ to $Y$. We consider the convex set of Markov operators
\[
\M(Y,X)=\bigl\{S:\C(Y)\to\C(X): S\text{ is positive and linear},\ S\mathbf1_Y=\mathbf1_X\bigr\}.
\]
The extreme points of $\M(Y,X)$ are the composition operators $C_hf=f\circ h$, where $h:X\to Y$ is continuous (Lemma~\ref{lem:extremes}). We therefore study finite convex combinations of composition operators with coefficients independent of $x\in X$.

The structure of extreme positive operators has been studied in \cite{Phelps63,BLP65}. Krein--Milman-type results for functions and operators appear in \cite{Phelps70}; for normalized positive operators, see the weak-operator convex-hull theorem in \cite[Corollary~2.6]{MP70}. Related approximation problems for Markov and stochastic operators have been investigated in \cite{HNR09,Brown66,Kim68,Kim72a,Kim72b,Iwanik80}. Approximation theorems for positive contractions on $L^\infty$ and positive operators on $L^p$ appear in \cite{Kim72c,Kim75,Grz90}. Uniform approximation in \cite{Kim68} and \cite[Theorem~3]{Kim72a} uses the $L^\infty\!\to L^1$ norm; \cite[Theorem~4]{Kim72a} uses the $L^2$ operator norm for Hilbert--Schmidt-type Markov operators. We also refer to \cite{DuruIlter18} for normalized positive operators with weak order units and to \cite{CK23} for another operator-theoretic setting involving graph coverings. Our problem fixes the number of extreme terms and asks for exact distances in operator norm.

We regard a finite graph as a topological space obtained by identifying endpoints of finitely many copies of the closed interval $[0,1]$. Endpoints of the same interval may be identified. Throughout, $q:Y\to X$ denotes a topological covering map.

Let $X$ now be a finite connected graph, and let $q:Y\to X$ be a connected covering of degree $d\ge2$. Each fiber $q^{-1}(x)$ has exactly $d$ points. We define the fiber-averaging Markov operator by
\begin{equation}\label{eq:coveraverage}
(A_qf)(x)=\frac1d\sum_{y\in q^{-1}(x)}f(y),\qquad f\in\C(Y).
\end{equation}
The covering $q$ has no continuous section. Indeed, if $A_q=\sum_{j=1}^N\alpha_j C_{h_j}$, equality of the representing probability measures forces $h_j(x)\in q^{-1}(x)$ for every $x\in X$ and each $j$ with $\alpha_j>0$. Thus $q\circ h_j=\mathrm{id}_X$, a contradiction. Hence no finite convex combination of extreme Markov operators equals $A_q$. We will nevertheless show that such combinations approximate $A_q$ arbitrarily well in operator norm.

For $m\ge1$, we define
\begin{equation}\label{eq:Em}
\E_m(Y,X)=\left\{\sum_{j=1}^m\alpha_j C_{h_j}:\ \alpha_j\ge0,\ \sum_{j=1}^m\alpha_j=1,\ h_j\in\C(X,Y)\right\},
\end{equation}
and
\begin{equation}\label{eq:Emeq}
\E_m^{\mathrm{eq}}(Y,X)=\left\{\frac1m\sum_{j=1}^m C_{h_j}:\ h_j\in\C(X,Y)\right\}.
\end{equation}
We allow zero coefficients in $\E_m$ and repeated maps in $\E_m^{\mathrm{eq}}$. All distances use the operator norm on $\mathcal L(\C(Y),\C(X))$.

Our first main result gives the exact distance for arbitrary convex weights.

\begin{theorem}\label{thm:main}
Let $q:Y\to X$ be a connected covering of degree $d\ge2$ between finite connected graphs. For $m\ge1$, write $m=kd+r$, where $k\ge0$ and $1\le r\le d$, and put $s=d-r+1$. Then
\begin{equation}\label{eq:main}
\dist(A_q,\E_m(Y,X))=\frac{2s}{d(ks+1)}.
\end{equation}
The distance is attained by a convex combination of $m$ composition operators.
\end{theorem}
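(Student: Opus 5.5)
The plan is to reduce the operator-norm distance to a combinatorial problem about moving weighted atoms continuously, and then to match a lower bound with an explicit construction. First I will show that for $T=\sum_j\alpha_jC_{h_j}$,
\[
\|A_q-T\|=\sup_{x\in X}\Bigl\|\tfrac1d\sum_{y\in q^{-1}(x)}\delta_y-\sum_j\alpha_j\delta_{h_j(x)}\Bigr\|_{\TV}.
\]
This holds because a difference of Markov operators $\C(Y)\to\C(X)$ has norm equal to the supremum of the total masses of the differences of the representing measures, and the total-variation estimates of the preliminary section give exactly this. At a point $x$, put $w_y(x)=\sum\{\alpha_j: h_j(x)=y\}$ for $y\in q^{-1}(x)$, and let $o(x)$ be the weight of the atoms lying off the fiber. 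The error at $x$ is then
\[
e(x)=\sum_{y\in q^{-1}(x)}\bigl|\tfrac1d-w_y(x)\bigr|+o(x).
\]
So $e(x)$ depends only on how the $m$ labelled weights are distributed among the $d$ sheets and the complement.

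For the \emph{lower bound}, I will use the fact that the covering is connected. Since $q$ has no section, no atom can stay on the fiber over all of $X$. Fix the weights $\alpha_1\ge\dots\ge\alpha_m$ and follow the atoms along a loop whose lift leaves its starting sheet. Each set $F_j=\{x:q(h_j(x))=x\}$ is closed, and on each component of $F_j$ the map $h_j$ is a local lift. Along the loop some atom must leave its sheet, and this happens at a point of $\partial F_j$ where the atom is off the fiber. The key step is an extremal lemma. For any assignment of the $m$ weights to $d$ sheets in which at least one chosen atom is removed, together with the competing configuration where it is reinserted on a different sheet, the maximum of $e$ over this ``transfer'' is at least $2s/(d(ks+1))$.

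I would prove the extremal lemma by a balancing argument. Write $m=kd+r$, so the best loads use $k$ or $k+1$ atoms per sheet. Moving one atom changes one sheet's load, which gives two-sided constraints of the form $e\ge 2\bigl(\tfrac1d-\text{load}\bigr)^+ +\cdots$. Optimizing the weights shows that the extremal configuration puts $r-1$ sheets with $k+1$ equal atoms, and the remaining $s=d-r+1$ sheets sharing $ks+1$ atoms, which is the atom in transit plus $k$ per sheet. Minimizing the worst error then yields $\tfrac{2s}{d(ks+1)}$. I expect this step to be the main obstacle. The difficulty is to show that the continuous rearrangement along the whole covering, and not just one transfer, cannot do better, and that unequal weights cannot beat the balanced choice. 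I would try first to encode the transfers as a potential-type argument: at the moment of transfer, the $s$ ``deficient'' sheets together must miss a definite mass.

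For the \emph{upper bound}, I will choose a spanning tree of $X$. Over a neighbourhood of the tree the covering is trivial, so there are $d$ continuous local sections $\sigma_1,\dots,\sigma_d$. The non-tree edges are finitely many copies of $[0,1]$, and over each one I may define $h_j$ to leave the fiber and travel continuously within $Y$ to the required endpoint lift. The weights are taken from the extremal configuration: $r-1$ sheets carry $k+1$ atoms of weight $1/(d(k+1))$ each. The other $s$ sheets are fed by $ks+1$ atoms of weight $1/(d(ks+1))\cdot s$ scaled appropriately, with one ``roaming'' atom performing the re-gluing across non-tree edges one at a time. I will check that the error is at most $2s/(d(ks+1))$ at every point, including interior points of non-tree edges. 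This is a direct computation, and it also shows that the distance is attained.
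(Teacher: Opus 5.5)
Your reduction to fiberwise total variation is sound, and so is your outline of the realization step (trivialize over a spanning tree, let atoms leave the fiber along paths in $Y$ on the remaining edges). Both inequalities, however, have genuine gaps.

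\textbf{Lower bound.} You leave the ``extremal lemma'' open, and as you state it, with an arbitrary chosen atom removed, it is false. For $d=2$, $m=3$ and weights $\tfrac12-\tfrac\epsilon2,\tfrac12-\tfrac\epsilon2,\epsilon$, taking the light atom off the fiber (or reinserting it on either sheet) gives error at most $2\epsilon$, whereas $2a=2/3$. Your claim that the atom is off the fiber at points of $\partial F_j$ is also wrong, since $F_j$ is closed. No loops or transfers are needed; the missing idea is to use the heaviest atom at a single point. Let $M=\max_j\alpha_j$ be attained at $j_0$. Since $h_{j_0}$ is not a section, there is $x_0$ with $h_{j_0}(x_0)\notin q^{-1}(x_0)$. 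There the missing fiber mass is at least $M$. The other $m-1=kd+r-1$ atoms each weigh at most $M$, so convexity of $t\mapsto(1/d-Mt)_+$ (balancing the counts) shows the missing mass is also at least $s(1/d-kM)_+$. Since $\max\{M,s(1/d-kM)_+\}\ge s/(d(ks+1))$ for every $M\ge0$, this handles unequal weights and all continuous rearrangements at once.

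\textbf{Upper bound.} The substance is not a ``direct computation.'' On each non-tree edge, the entire labelled allocation must be carried to its image under that edge's sheet permutation. This must be done by single-atom moves, during each of which the stationary atoms miss at most $a$. Since $Y$ is connected, these permutations generate a transitive group, so some of them move any given sheet. One roaming atom cannot do this, and whether such move sequences exist depends on the weights; yours fail in general.

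Take $d=3$, $m=8$ ($k=2$, $r=2$, $a=6/45$). You use three atoms $M$ of weight $5/45$ and five atoms $L$ of weight $6/45$, with target $15/45$ per sheet.
\begin{enumerate}
\item Two atoms off the fiber already miss more than $a$, so moves are one at a time.
\item A large atom in transit forces every stationary sheet to carry at most $15/45$, which forces $\{L,L\},\{L,L\},\{M,M,M\}$.
\item A medium atom in transit allows total stationary excess at most $1/45$, which forces $\{L,L\},\{L,L\},\{L,M,M\}$.
\end{enumerate}
Hence the sheet carrying two $M$'s is invariant under every admissible move. A loop whose monodromy moves that sheet therefore cannot be traversed with error at most $2a$.

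What you need is the paper's transport lemma. Take $kd+1$ atoms of weight $a$ and $r-1$ atoms of weight $b=1/(d(ks+1))$, so that $ka+b=1/d$ and $sb=a$. Keep $k$ baseline atoms in every sheet and at most one extra atom per sheet; then every move leaves exactly $s$ sheets short by $b$. For $r<d$, shuffling extra atoms through a vacancy lets you transpose any two baseline atoms. For $r=d$, all weights equal $1/m$ and direct swaps suffice. Either way every permutation of the reference allocation is reachable, and the moves are then realized edge by edge using the lifts $y_i$ and connecting paths in $Y$.
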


Our second main result concerns equal convex weights.

\begin{theorem}\label{thm:eq}
Under the assumptions of Theorem~\ref{thm:main}, with $m=kd+r$ and $s=d-r+1$, we have
\begin{equation}\label{eq:eqdist}
\dist(A_q,\E_m^{\mathrm{eq}}(Y,X))=\frac{2rs}{dm}.
\end{equation}
The distance is attained by an equally weighted average of $m$ composition operators.
\end{theorem}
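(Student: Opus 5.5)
The plan is to reduce the operator norm to a pointwise total-variation problem and then analyse how the atoms of the approximating measures move as $x$ travels around $X$.

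\textbf{Step 1: reduce to total variation.} For $S=\frac1m\sum_{j}C_{h_j}$, the operator $A_q-S$ is represented by the signed measures $\mu_x-\nu_x$, where
\[
\mu_x=\frac1d\sum_{y\in q^{-1}(x)}\delta_y,\qquad \nu_x=\frac1m\sum_{j=1}^m\delta_{h_j(x)}.
\]
Hence $\|A_q-S\|=\sup_{x\in X}\|\mu_x-\nu_x\|_{\TV}$. Fix $x$ and label the fiber $q^{-1}(x)=\{y_1,\dots,y_d\}$. Let $a_i(x)=\#\{j:h_j(x)=y_i\}$ and $b(x)=m-\sum_i a_i(x)$, the number of atoms lying off the fiber. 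Then
\[
\|\mu_x-\nu_x\|_{\TV}=\sum_{i=1}^d\Bigl|\frac1d-\frac{a_i(x)}m\Bigr|+\frac{b(x)}m ,
\]
so the problem becomes purely combinatorial at each point, with continuity of the $h_j$ as the only coupling between points.

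\textbf{Step 2: upper bound by construction.} I would build maps $h_j$ that keep the atoms on the fiber, in a near-balanced arrangement, away from small neighbourhoods of finitely many cut points. Choose a spanning tree of $X$ and place one cut point on each non-tree edge. Over the complement of the cut points the covering is trivial, so continuous sections exist there. The $h_j$ are lifts in which $r$ fiber points carry $k+1$ atoms and $d-r$ carry $k$ atoms.

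Crossing a cut point applies the monodromy permutation to the labels. Inside a small arc around the cut point, I would realise the required rearrangement by moving atoms continuously through $Y$. An atom may leave its fiber and travel along an arc in $Y$ to another sheet. The atoms should be moved in a staged order chosen so that at every instant the displayed error is at most $2rs/(dm)$. I would schedule the moves as a permutation decomposed into transpositions of sheets, moving one surplus atom at a time. I then check, by direct computation with the formula above, that the worst instant gives exactly $2rs/(dm)$.

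\textbf{Step 3: lower bound.} Let $U_j=\{x: q(h_j(x))=x\}$, a closed set. Because $q$ has no section, $U_j\ne X$ for every $j$; so each atom must be off-fiber somewhere. The stronger topological input I need comes from connectedness of $Y$: the monodromy group acts transitively on a fiber. Choose a loop $\gamma$ at $x_0$ whose monodromy moves a sheet carrying a minimal count.

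Following the atoms along $\gamma$ gives a continuous motion of $m$ labelled atoms. The motion starts from the count vector $a$ and ends at the count vector permuted by the monodromy. At every time the atoms either sit on the current fiber or are off it, and an atom changes sheet only by passing through a time when it is off the fiber. I would then prove a \emph{rearrangement lemma}: in any such continuous motion that genuinely permutes the sheets, there is an instant at which the error is at least $2rs/(dm)$.

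The heuristic behind the lemma is this. The sheet counts are integers, so some sheet must be temporarily depleted, or some atoms must be in transit off the fiber. Optimising the integer configuration then gives the stated value. For example, when $r=d$ one atom is in transit, giving excess $1/m$ and deficit $1/m$, hence error $2/m$. In general the bound should be $\min\bigl(\text{deficit}+\text{excess}\bigr)$ over the configurations that an integer transition must pass through, and this minimum should evaluate to $2rs/(dm)$.

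\textbf{Main obstacle.} Making the rearrangement lemma rigorous is the hardest step. It requires (i) a discrete model of the continuous motion, using the finitely many times at which atoms enter or leave the fiber, and (ii) a careful optimisation over the integer count vectors that the motion is forced to visit. I would attempt (ii) by minimising the error subject to the constraint that some sheet count changes. The constraint to exploit is that the total number of atoms on the fiber drops below the number needed to keep every count at least $k$ or $k+1$ as required.
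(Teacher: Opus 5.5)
The genuine gap is your Step~3: the lower bound $\dist(A_q,\E_m^{\mathrm{eq}})\ge 2rs/(dm)$ is never proved. Your ``rearrangement lemma'' is described only heuristically (``this minimum should evaluate to $2rs/(dm)$''). The machinery you propose for it, a monodromy loop plus a discretisation of the motion of the atoms, is unnecessary. As stated it is also not quite right: with $k=0$ and $r<d$, a sheet of minimal count is empty, and a loop whose monodromy moves only empty sheets forces no atom off the fiber.

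The missing idea is that your own observation $U_j\ne X$ already suffices at a single point. Choose $x_0$ with $h_1(x_0)\notin q^{-1}(x_0)$, and let $c_i$ be the number of $j\ge2$ with $h_j(x_0)=y_i$. Then $\sum_i c_i\le m-1=kd+r-1$. Since both measures have total mass $1$,
\[
\|\mu_{x_0}-\nu_{x_0}\|_{\TV}=2\sum_{i=1}^d\Bigl(\frac1d-\frac{c_i}{m}\Bigr)_+ .
\]
The map $t\mapsto(1/d-t/m)_+$ is convex, so moving one unit from $c_i$ to $c_j$ when $c_i\ge c_j+2$ does not increase the sum. Hence the sum is at least its value at the balanced vector with $r-1$ entries $k+1$ and $s$ entries $k$. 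That value is $2s(1/d-k/m)=2rs/(dm)$. This is Lemma~\ref{lem:balancing} with $M=1/m$, exactly as used in Lemma~\ref{lem:eqlower}; no analysis of how atoms move along loops is needed.

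Your Steps~1 and~2 follow the paper's upper-bound construction. You use a balanced allocation with $r$ sheets of $k+1$ atoms and $d-r$ sheets of $k$ atoms. You move one atom at a time through $Y$, and its path is irrelevant because the moving atom cannot increase the stationary deficit. Trivialising over the complement of one cut point per non-tree edge is an equivalent substitute for the paper's edge-by-edge bookkeeping in Lemma~\ref{lem:realize}.

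The ``direct computation'' you defer does hide one real constraint. When $r<d$, removing an atom from a sheet holding only $k$ atoms in the balanced allocation makes the stationary deficit $(d(r+1)-r^2)/(dm)$. This exceeds $rs/(dm)$ by $(d-r)/(dm)$. So every move must start at a sheet currently holding at least $k+1$ atoms. In particular, exchanging the contents of two low sheets requires first borrowing an atom from a full sheet and returning it afterwards, as in Lemma~\ref{lem:eqtransport}. With that rule your schedule works.
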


Both distances depend only on the covering degree $d$ and the number of terms $m$. They therefore have the same values for all connected coverings of degree $d$. Equal weights attain the unrestricted minimum precisely when $k=0$, $r=1$, or $r=d$. Otherwise they give a strictly larger error. The ratio of the two distances is $r(ks+1)/(kd+r)$. For fixed $d$, the unrestricted distance multiplied by $m$ tends to $2$. In contrast, the scaled equal-weight distance depends on the remainder $r$ and has no limit when $d\ge3$.

We prove the lower bounds using total-variation estimates for probability measures. For the upper bounds, we represent the terms of each convex combination by labelled weighted atoms. We rearrange these atoms while controlling the missing mass at each fiber position. We then realize the rearrangements by continuous maps on the covering graph.

\section{Positive kernels and an exact norm formula}
For a compact Hausdorff space $Y$, let $\Prob(Y)$ be the set of regular Borel probability measures on $Y$. A Markov operator $S\in\M(Y,X)$ is represented by a weak-star continuous family $(\mu_x)_{x\in X}\subset\Prob(Y)$:
\[
(Sf)(x)=\int_Y f\,d\mu_x.
\]
For finite signed measures we use the total variation norm $\|\nu\|_{\TV}=|\nu|(Y)$; in particular two mutually singular probability measures have distance $2$.

\begin{lemma}\label{lem:extremes}
The extreme points of $\M(Y,X)$ are exactly the operators $C_h$ associated with continuous maps $h:X\to Y$.
\end{lemma}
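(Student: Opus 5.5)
The plan is to prove the two inclusions separately. Throughout we use the representation of $S\in\M(Y,X)$ by a weak-star continuous family $(\mu_x)_{x\in X}\subset\Prob(Y)$ with $(Sf)(x)=\int_Y f\,d\mu_x$; this is the Riesz representation theorem applied to $f\mapsto (Sf)(x)$, which is a positive functional with value $1$ at $\mathbf1_Y$.

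First, $C_h$ is extreme. The kernel of $C_h$ is $\mu_x=\delta_{h(x)}$. Suppose $C_h=\tfrac12(S_1+S_2)$ with $S_i\in\M(Y,X)$ having kernels $\mu^i_x$. Then $\delta_{h(x)}=\tfrac12(\mu^1_x+\mu^2_x)$ for each $x$. Point masses are extreme in $\Prob(Y)$, so $\mu^1_x=\mu^2_x=\delta_{h(x)}$ for every $x$, and hence $S_1=S_2=C_h$. To see that point masses are extreme, note that $\mu^i_x\le 2\delta_{h(x)}$ forces $\mu^i_x$ to be supported on $\{h(x)\}$, and $\mu^i_x$ has mass $1$.

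Conversely, let $S$ be extreme. It suffices to show that every $\mu_x$ is a point mass $\delta_{h(x)}$. Once this holds, $h$ is continuous: $x\mapsto\delta_{h(x)}$ is weak-star continuous and $y\mapsto\delta_y$ is a homeomorphism onto its image, so $f\circ h=Sf$ is continuous for every $f$, and Urysohn's lemma then gives continuity of $h$. Then $S=C_h$.

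The cleanest route to point masses avoids kernels: an operator $S\in\M(Y,X)$ is extreme iff it is a lattice (equivalently algebra) homomorphism. I would prove the contrapositive directly as follows. Suppose that at some $x_0$ the measure $\mu_{x_0}$ is not a point mass. Choose $g\in\C(Y)$ with $0\le g\le 1$ such that $0<\int g\,d\mu_{x_0}<1$ fails to be multiplicative, meaning $\int g^2\,d\mu_{x_0}\ne(\int g\,d\mu_{x_0})^2$. Now use the standard Phelps-type perturbation
\[
S_\pm f=S f\pm\bigl(S(fg)-Sf\cdot Sg\bigr),
\]
possibly scaled by a small constant and with $g$ adjusted. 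For positivity we need an estimate of the form $|S(fg)-Sf\,Sg|\le S f$ for $f\ge0$. This holds because $0\le S(fg)\le Sf$ and $0\le Sf\,Sg\le Sf$, so the difference lies in $[-Sf,Sf]$. Hence $S_\pm$ are positive. Moreover $S_\pm\mathbf1=\mathbf1\pm(Sg-Sg)=\mathbf1$, and $S_\pm$ map into $\C(X)$ because products of continuous functions are continuous. Thus $S=\tfrac12(S_++S_-)$ with $S_\pm\in\M(Y,X)$, and extremality forces $S(fg)=Sf\cdot Sg$ for all $f$ and all $0\le g\le1$. By linearity, $S$ is multiplicative.

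A multiplicative positive unital $S$ has each functional $f\mapsto(Sf)(x)$ a character of $\C(Y)$. Hence each such functional is evaluation at a point $h(x)$, that is, $\mu_x=\delta_{h(x)}$, and the previous paragraph applies.

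The main obstacle is verifying that the perturbation stays in $\M(Y,X)$, that is, positivity and continuity. The inequality $0\le S(fg),\,Sf\,Sg\le Sf$ for $f\ge0$ and $0\le g\le1$ handles positivity, and continuity is automatic. So the only care needed is the final identification of characters of $\C(Y)$ with point evaluations, which is standard.
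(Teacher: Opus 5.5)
Your proof is correct and follows essentially the paper's argument. Your perturbation $S_\pm f=Sf\pm\bigl(S(fg)-Sf\cdot Sg\bigr)$ is exactly the paper's $S\pm D$ with cutoff $\chi\equiv1$; your estimate $0\le S(fg),\,Sf\,Sg\le Sf$ shows that neither the cutoff nor any rescaling is needed. The only difference is the finish. You let extremality force $S$ to be multiplicative and then identify characters of $\C(Y)$ with point evaluations, so the witness $g$ you choose at the start is never used. The paper instead uses such a witness to get $(Dg)(x_0)=\mu_{x_0}(g^2)-\mu_{x_0}(g)^2>0$, which gives $S_+\ne S_-$ directly.
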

\begin{proof}
If $\mu_x=\delta_{h(x)}$ for every $x$, then pointwise extremality of each Dirac measure shows that $C_h$ is extreme. Conversely, suppose $\mu_{x_0}$ is not a point mass. Choose $g\in\C(Y)$ with $0\le g\le1$ and
$\mu_{x_0}(g^2)-\mu_{x_0}(g)^2>0$. Let $\chi\in\C(X)$ satisfy $0\le\chi\le1$ and $\chi(x_0)=1$. For $f\in\C(Y)$ put
\[
(Df)(x)=\chi(x)\left[\mu_x(gf)-\mu_x(g)\mu_x(f)\right].
\]
The dependence on $x$ is continuous, $D\mathbf1=0$, and the positive measures
$[1\pm\chi(x)(g-\mu_x(g))]\mu_x$ show that $S\pm D$ are Markov operators. Since $(Dg)(x_0)>0$, they are distinct. Therefore $S$ is not extreme. Weak-star continuity of $x\mapsto\delta_{h(x)}$ is equivalent to continuity of $h$.
\end{proof}

The operator $A_q$ is represented by
\[
\mu_x=\frac1d\sum_{y\in q^{-1}(x)}\delta_y.
\]
Consequently, for $R=\sum_{j=1}^m\alpha_jC_{h_j}$,
\begin{equation}\label{eq:TV}
\|A_q-R\|=\sup_{x\in X}\left\|\frac1d\sum_{y\in q^{-1}(x)}\delta_y-\sum_{j=1}^m\alpha_j\delta_{h_j(x)}\right\|_{\TV}.
\end{equation}
If the points of $q^{-1}(x)$ are $y_1,\ldots,y_d$ and the second measure is $\nu$, then
\begin{equation}\label{eq:missing}
\frac12\|\mu_x-\nu\|_{\TV}
=\sum_{i=1}^d\left(\frac1d-\nu(\{y_i\})\right)_+.
\end{equation}
In particular, mass outside the fiber only increases the right-hand side relative to a measure concentrated on the fiber.

\section{A universal lower bound}
We first separate the lower bound from the graph hypothesis.

\begin{lemma}\label{lem:balancing}
Let $d,m\ge1$ and write $m-1=kd+(r-1)$, where $1\le r\le d$. If $c_1,\ldots,c_d$ are nonnegative integers with $\sum_i c_i\le m-1$, then for every $M\ge0$,
\begin{equation}\label{eq:occupancy}
\sum_{i=1}^d\left(\frac1d-c_iM\right)_+
\ge(d-r+1)\left(\frac1d-kM\right)_+.
\end{equation}
\end{lemma}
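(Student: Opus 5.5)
The plan is to reduce the inequality to a convexity-type statement about the function $\phi(c)=(\frac1d-cM)_+$ on the nonnegative integers. First, since $\phi$ is nonincreasing in $c$, I would increase some of the $c_i$ so that $\sum_i c_i=m-1$ exactly. This only decreases the left-hand side of \eqref{eq:occupancy}, so it suffices to prove the bound when $\sum_i c_i=m-1=kd+(r-1)$.

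The key observation is that $\phi$ is convex on $[0,\infty)$: it is the positive part of an affine function. Consequently the sum $\sum_i\phi(c_i)$ under the integer constraint $\sum c_i=m-1$ is minimized by the most balanced configuration. I would prove this by the standard exchange argument. If $c_a\ge c_b+2$, replace $(c_a,c_b)$ by $(c_a-1,c_b+1)$. Convexity gives $\phi(c_a-1)+\phi(c_b+1)\le\phi(c_a)+\phi(c_b)$, because the increments of $\phi$ over unit steps are nondecreasing. Each such move strictly decreases $\sum c_i^2$, so the process terminates. It ends at the balanced vector, in which $r-1$ entries equal $k+1$ and $d-r+1$ entries equal $k$. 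The minimum is therefore $(r-1)\phi(k+1)+(d-r+1)\phi(k)$.

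Finally I would drop the nonnegative term $(r-1)\phi(k+1)$. This yields $\sum_i\phi(c_i)\ge(d-r+1)(\frac1d-kM)_+$, which is \eqref{eq:occupancy}. The case $M=0$ is trivial, and nothing in the argument uses $M>0$ except through monotonicity, which holds for all $M\ge0$.

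The only delicate point is the exchange inequality for the piecewise-linear $\phi$. I would verify it directly: $\phi(c)-\phi(c+1)=\min\{M,(\frac1d-cM)_+\}$ is nonincreasing in $c$, which is exactly discrete convexity. Everything else is bookkeeping.
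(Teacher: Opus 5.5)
Your proof is correct and follows the paper's own argument: reduce to $\sum_i c_i=m-1$ by monotonicity, balance the $c_i$ by unit exchanges justified by convexity of $t\mapsto(\frac1d-Mt)_+$, then drop the $(r-1)$ terms equal to $(\frac1d-(k+1)M)_+$. Your termination argument via $\sum_i c_i^2$ and the discrete-convexity check $\phi(c)-\phi(c+1)=\min\{M,(\frac1d-cM)_+\}$ are correct details that the paper leaves implicit.
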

\begin{proof}
Increasing any $c_i$ cannot increase the left side, so we may assume their sum is $m-1$. The function $t\mapsto(1/d-Mt)_+$ is convex. If $c_i\ge c_j+2$, moving one unit from $c_i$ to $c_j$ does not increase the sum. Repeating this balancing step gives $r-1$ entries equal to $k+1$ and $d-r+1$ entries equal to $k$. Dropping the nonnegative contributions of the former proves \eqref{eq:occupancy}.
\end{proof}

\begin{proposition}\label{prop:lower}
Let $q:Y\to X$ be a covering of degree $d$ between compact Hausdorff spaces which has no continuous section. Then, with $m=kd+r$ and $s=d-r+1$ as above,
\[
\dist(A_q,\E_m(Y,X))\ge\frac{2s}{d(ks+1)}.
\]
\end{proposition}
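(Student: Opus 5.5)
The plan is to fix an arbitrary $R=\sum_{j=1}^m\alpha_jC_{h_j}\in\E_m(Y,X)$ and to exhibit one point $x_0\in X$ at which the total-variation quantity in \eqref{eq:TV} is already at least $2s/(d(ks+1))$. Put $M=\max_j\alpha_j$, and choose an index $j_0$ with $\alpha_{j_0}=M$. Since $q$ has no continuous section, the continuous map $h_{j_0}$ is not a section. Hence there is a point $x_0$ with $h_{j_0}(x_0)\notin q^{-1}(x_0)$. Both of the estimates below are made at this single point $x_0$.

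Write $q^{-1}(x_0)=\{y_1,\dots,y_d\}$, and let $\nu=\sum_j\alpha_j\delta_{h_j(x_0)}$. Formula \eqref{eq:missing} holds for an arbitrary probability measure $\nu$, so we can use it here.

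\emph{First bound.} The atom of weight $M$ lies off the fiber, so $\sum_i\nu(\{y_i\})\le 1-M$. It follows that
\[
\sum_{i=1}^d\Bigl(\frac1d-\nu(\{y_i\})\Bigr)_+\ \ge\ \sum_{i=1}^d\Bigl(\frac1d-\nu(\{y_i\})\Bigr)\ \ge\ M .
\]

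\emph{Second bound.} Let $c_i$ be the number of indices $j\ne j_0$ with $h_j(x_0)=y_i$. Then $\sum_ic_i\le m-1$. Each such term has weight at most $M$, so $\nu(\{y_i\})\le c_iM$. We have $m-1=kd+(r-1)$, so Lemma~\ref{lem:balancing} applies and gives
\[
\sum_{i=1}^d\Bigl(\frac1d-\nu(\{y_i\})\Bigr)_+\ \ge\ \sum_{i=1}^d\Bigl(\frac1d-c_iM\Bigr)_+\ \ge\ s\Bigl(\frac1d-kM\Bigr)_+ .
\]

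Combining the two bounds yields
\[
\tfrac12\|A_q-R\|\ \ge\ \max\Bigl\{M,\ s\Bigl(\frac1d-kM\Bigr)_+\Bigr\}.
\]
It remains to minimize the right-hand side over $M\ge0$. The first entry is increasing in $M$ and the second is nonincreasing, so the minimum occurs where they cross. Solving $M=s/d-ksM$ gives $M=s/(d(ks+1))$, which is also the minimum value. One degenerate case needs a separate check: if $kM\ge 1/d$ with $k\ge1$, the second bound is vacuous, but then $M\ge 1/(dk)\ge s/(d(ks+1))$ directly. Therefore $\|A_q-R\|\ge 2s/(d(ks+1))$ for every $R\in\E_m(Y,X)$, and taking the infimum over $R$ proves the proposition.

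The main obstacle is choosing the right point and the right parameter. A naive approach uses "excess mass at a heavy atom", and that gives a weaker bound. The key observation is that the no-section hypothesis should be applied to a map carrying the \emph{largest} weight. This single choice produces both the missing mass $M$ and the counting constraint on the remaining $m-1$ atoms at the same point $x_0$, which is exactly what Lemma~\ref{lem:balancing} is designed to exploit.
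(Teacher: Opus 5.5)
Your proposal is correct and is essentially the paper's proof: you apply the no-section hypothesis to a map of maximal weight $M$, then get the two bounds $M$ (from the off-fiber mass) and $s(1/d-kM)_+$ (from Lemma~\ref{lem:balancing}) at the same point $x_0$. Your crossing-point minimization over $M$ is the paper's case split $M\ge a$ versus $M\le a$ with $a=s/[d(ks+1)]$, stated differently.
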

\begin{proof}
Fix $R=\sum_{j=1}^m\alpha_jC_{h_j}\in\E_m$, and let $M=\max_j\alpha_j$, attained at $j_0$. Because $q$ has no continuous section, $q\circ h_{j_0}$ is not the identity. Thus there is $x_0$ such that $h_{j_0}(x_0)\notin q^{-1}(x_0)$.

Put $\nu=\sum_j\alpha_j\delta_{h_j(x_0)}$ and list the points of the fiber as $y_1,\ldots,y_d$. Let $c_i$ be the number of indices $j\ne j_0$ with $h_j(x_0)=y_i$. Then $\sum_i c_i\le m-1$ and $\nu(\{y_i\})\le c_iM$. From \eqref{eq:missing} and Lemma \ref{lem:balancing},
\[
\frac12\|\mu_{x_0}-\nu\|_{\TV}\ge s\left(\frac1d-kM\right)_+.
\]
The mass of $\nu$ outside the fiber is at least $M$, so also
$\frac12\|\mu_{x_0}-\nu\|_{\TV}\ge M$. Write
\[
a=\frac{s}{d(ks+1)}.\]
If $M\ge a$ then the latter inequality yields $\|\mu_{x_0}-\nu\|_{\TV}\ge2a$; if $M\le a$ then $s(1/d-kM)\ge s(1/d-ka)=a$. Thus $\|A_q-R\|\ge2a$, as required.
\end{proof}

For later use, a connected covering $q:Y\to X$ of degree $d>1$ between connected graphs cannot have a continuous section. Indeed, the image of a section is locally one of the disjoint open parts of the covering. It is therefore a nonempty proper subset of $Y$ that is both open and closed, contrary to connectedness.

\section{Weighted-atom rearrangements and the upper bound}
We now prove the converse estimate. The combinatorial step is stated independently of graph coverings.

Consider $d$ labelled fiber positions. We represent each summand $\alpha_j\delta_{h_j(x)}$ by a \emph{labelled weighted atom} of weight $\alpha_j$. Labels distinguish summands even when their support points coincide; the weights remain fixed. An \emph{allocation} assigns these atoms to the fiber positions. An elementary move changes the position of one labelled atom, which is temporarily removed when estimating the mass left on the fiber. For an allocation with weights $w_j$, write $v_i$ for the total weight remaining in position $i$, and set
\begin{equation}\label{eq:defect}
\mathscr D=\sum_{i=1}^d\left(\frac1d-v_i\right)_+.
\end{equation}
An elementary move is \emph{$a$-admissible} if $\mathscr D\le a$ while the moving atom is removed and after it arrives. For a complete allocation, $\mathscr D$ is half the total-variation distance to the uniform probability measure on the positions. When an atom moves, the defect of the stationary atoms bounds the error from above, regardless of its path in $Y$.

\begin{lemma}\label{lem:transport}
Let $d\ge2$, $m=kd+r$ with $k\ge0$ and $1\le r\le d$, and put $s=d-r+1$. There are $m$ labelled weighted atoms whose weights sum to $1$ and a reference allocation $\mathfrak a$ such that:
\begin{enumerate}[label=(\roman*)]
\item every permutation of the fiber positions applied to $\mathfrak a$ can be reached from $\mathfrak a$ by finitely many $a$-admissible elementary moves, where $a=s/[d(ks+1)]$;
\item every intermediate allocation in such a sequence satisfies $\mathscr D\le a$.
\end{enumerate}
The weights of the labelled atoms do not depend on the permutation.
\end{lemma}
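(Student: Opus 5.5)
The weights and the reference allocation would be read off from the equality case in the proof of Proposition~\ref{prop:lower}. There the extremal configuration has maximal weight $M=a$ with $a=s/[d(ks+1)]$. While one atom is "in transit", the remaining $m-1$ atoms sit in the balanced pattern of Lemma~\ref{lem:balancing}: $r-1$ positions carry $k+1$ atoms and $s$ positions carry $k$ atoms. A direct computation gives $1/d-ka=a/s$, so $s$ positions each missing $a/s$ cost exactly $a$. Also $ma\ge1$, since $(kd+r)s-d(ks+1)=(r-1)(d-r)\ge0$. I would therefore take $kd$ "large" atoms of weight $a$. The remaining mass $1-kda=1/(ks+1)$ I would split among $r$ "small" atoms of weight $b=1/[r(ks+1)]$. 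Since $rs\ge d$, we have $a/s\le b\le a$. The reference allocation $\mathfrak a$ puts $k$ large atoms in every position and one small atom in each of positions $1,\dots,r$. Its defect is $(d-r)a/s=(s-1)a/s\le a$. If this choice proves too rigid, I would let the $r$ small weights vary, subject to staying between $a/s$ and $a$ and summing to $1/(ks+1)$.

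Next I would reduce (i) to transpositions. Every permutation of positions is a product of transpositions $(i\,j)$, and the pattern $\mathfrak a$ is invariant up to relabelling of atoms of equal type. So it suffices to show the following: from any permuted copy $\sigma\mathfrak a$, one can reach $(i\,j)\sigma\mathfrak a$ by admissible moves, with every intermediate allocation satisfying $\mathscr D\le a$. Since atom labels are tracked, a transposition must genuinely swap the contents of positions $i$ and $j$. Two cases arise. If $i$ and $j$ carry the same number of small atoms, the transposition only exchanges labels within each type. If exactly one of them, say $i$, carries a small atom, the small atom must travel from $i$ to $j$ and the large atoms must be exchanged one by one.

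For the moves themselves I would use a "hole-chasing" order. Moving one large atom from $i$ to $j$ leaves, while it is removed, $m-1$ stationary atoms. Their defect is at most the balanced bound of Lemma~\ref{lem:balancing}, provided the removal happens at a position that currently holds a surplus, that is, a small atom. So I would always first shift a small atom so that the position about to lose a large atom is over-filled, move the large atom, and then move the small atom back or onward. Each single step changes the defect by a controlled amount, and these amounts can be checked by \eqref{eq:defect}. Exchanging the $k$ large atoms of $i$ and $j$ alternates the direction of the moves, so the positions never lose more than one large atom at a time relative to $\mathfrak a$.

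The main obstacle is the step where a large atom leaves a position that holds no small atom, or where the small atom itself is in transit. In the naive estimate the defect rises to about $2a-b$, which exceeds $a$ unless $b=a$. I expect this to require a sharper choice of weights, making the small atoms heavier and possibly of unequal weights. It may also require an ordering in which the moving atom always departs from a position with $k+1$ atoms, so that during the move the stationary configuration is exactly the balanced extremal one with defect $s(1/d-ka)=a$. The first thing I would try is $r$ atoms in the extra layer, one of weight $a$ and the others sharing the remainder. I would then verify by the explicit count of Lemma~\ref{lem:balancing} that every removed-atom configuration has at most $s$ deficient positions, each deficient by at most $a/s$.
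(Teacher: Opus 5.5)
Your proposal has a genuine gap. The construction you commit to cannot work in the main case, and the repair you sketch is never carried out.

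The obstruction is sharper than your ``naive estimate'' suggests. While an atom of weight $w$ is removed, the stationary masses satisfy $\sum_i v_i=1-w$, so $\mathscr D=w+\sum_i(v_i-1/d)_+$. Hence moving a weight-$a$ atom is $a$-admissible only if every position has stationary mass at most $1/d=ka+a/s$. Your weights are $kd$ atoms of weight $a$ and $r$ atoms of weight $b=1/[r(ks+1)]$; for $r<d$, both $a$ and $b$ exceed $a/s$. So during a large-atom move no position may hold $k+1$ large atoms. Thus exactly one position holds $k-1$ of them, and every other position holds $k$ and therefore no small atom. All $r$ small atoms must then sit in the deficient position, which requires $rb=1/(ks+1)\le a+a/s=(s+1)/[d(ks+1)]$, i.e.\ $r\le 2$. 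Consequently, for $k\ge1$ and $3\le r\le d-1$ no large atom can ever make an admissible move. No nontrivial permutation is then reachable, whatever ordering or hole-chasing you use. Note also that Lemma~\ref{lem:balancing} gives a \emph{lower} bound on the defect. It cannot certify $\mathscr D\le a$ for a stationary configuration; that has to be checked from the actual masses.

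Your fallback (one extra atom of weight $a$, the other $r-1$ sharing the remainder) produces exactly the paper's weights: $kd+1$ atoms of weight $a$ and $r-1$ atoms of weight $b=a/s$, so that $ka+b=1/d$ and $sb=a$. What is missing is the move scheme, which is the substance of the lemma.

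For $r<d$, the paper keeps the invariant that each position holds its $k$ baseline large atoms plus at most one extra atom ($H$ or a small one). This leaves $s-1\ge1$ vacant positions. While an extra atom moves into a vacancy, exactly $s$ positions have stationary mass $ka$ and all others have mass at least $1/d$, so the stationary defect is $sb=a$. These moves, together with a three-move exchange through a vacancy, realize every placement of the extra atoms.

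To exchange baseline atoms $A$ in $i$ and $B$ in $j$, first park $H$ in $i$ and make $j$ vacant, then move $A:i\to j$ and $B:j\to i$. Each departure leaves exactly $ka$ at its source, so the defect is again $sb=a$. The heuristic that the source should hold $k+1$ atoms is not enough. A large atom leaving a position with $k$ large atoms and one small atom leaves $(k-1)a+b$, and the defect becomes $2a-b>a$; this is exactly the obstacle you found.

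The case $r=d$ has no vacancy. It is handled separately, with all weights $1/m$ and direct swaps.
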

\begin{proof}
Suppose first $r<d$ and set
\[
a=\frac{s}{d(ks+1)},\qquad b=\frac1{d(ks+1)}.
\]
Take $kd+1$ large atoms of weight $a$ and $r-1$ small atoms of weight $b$. Their weights sum to $1$, since $(kd+1)s+(r-1)=d(ks+1)$, and
\begin{equation}\label{eq:ab}
ka+b=\frac1d,\qquad sb=a.
\end{equation}
Place $k$ large atoms in each position. Place the remaining large atom $H$ in one position and the $r-1$ labelled small atoms in distinct other positions. The remaining $s-1=d-r\ge1$ positions contain no extra atom; we call them vacant extra positions. Thus every position has its $k$ baseline large atoms and at most one extra atom.

Moving any extra atom ($H$ or a small atom) to a vacant extra position preserves this form of allocation. During the move the stationary allocation has exactly $s$ positions containing only their $k$ baseline large atoms. Each such position is short by $b$, whereas all other positions have mass at least $ka+b=1/d$. The stationary defect therefore equals $sb=a$. After arrival the new vacancy is the source position, and the defect is at most $a$.

These moves allow us to prescribe simultaneously the destination of $H$ and that of a vacancy, provided they are distinct. Indeed, an extra atom may be sent to any vacant position; when a vacancy is present, two labelled extra atoms can be exchanged without altering the other positions by moving the first atom to the vacancy, the second to the first atom's former position, and the first to the second atom's former position. The vacancy returns to its initial position. Together with direct atom--vacancy exchanges, these operations generate every placement of the $r$ distinct extra atoms in different positions, leaving $s-1$ vacant positions. The case $d=2$, $r=1$ is simply the exchange of $H$ with the sole vacancy.

Now let $A$ and $B$ be baseline large atoms in distinct positions $i$ and $j$. First arrange, by the preceding moves, that $H$ lies in position $i$ while position $j$ has a vacant extra position. Move
\[
 A:i\longrightarrow j,\qquad B:j\longrightarrow i.
\]
During the move of $A$, position $i$ and the $s-1$ vacant extra positions each have stationary mass $ka$. During the move of $B$, positions $i,j$ and the other $s-2$ vacant extra positions have stationary mass $ka$. All other positions have stationary mass at least $ka+b=1/d$. Thus the stationary defect is $sb=a$ during either move, and arrival cannot increase it. After the two moves the positions again contain exactly $k$ baseline large atoms, $H$ remains in $i$, and the labels $A,B$ have been exchanged. Reverse the preliminary extra-atom moves to restore their original locations without disturbing the new baseline-label assignment.

To obtain a prescribed permutation of the fiber positions applied to the initial labelled allocation, first put all extra atoms in their desired positions. Then correct the baseline large-atom labels: whenever a baseline atom lies in a position different from its desired position, another such atom in that desired position belongs elsewhere; by following the resulting finite cycles and using transpositions between different positions, all labels are put in their required positions. Each transposition restores the chosen extra-atom arrangement. Atoms within one position need no ordering. If $k=0$ this second step is empty. This proves the required reachability and the defect estimate for $r<d$.

If $r=d$, then $m=(k+1)d$. Give all $m$ atoms weight $a=1/m$ and put $k+1$ atoms in each position. To exchange labels $A,B$ in distinct positions $i,j$, move $A:i\to j$ and then $B:j\to i$. During either move the stationary defect is exactly $a$; all other positions retain their target mass. These transpositions yield every permutation of the fiber positions, as required.
\end{proof}

\begin{lemma}\label{lem:realize}
Let $q:Y\to X$ be a connected covering of degree $d$ between finite connected graphs. Suppose that a reference allocation of $m$ labelled weighted atoms among $d$ fiber positions can be transformed into any of its position permutations by finitely many single-atom moves. Assume that, throughout these moves, the missing mass of the stationary atoms is at most $a$. Then there are continuous maps $h_1,\ldots,h_m:X\to Y$ with the prescribed weights $\alpha_1,\ldots,\alpha_m$ such that
\[
\left\|A_q-\sum_{j=1}^m\alpha_j C_{h_j}\right\|\le2a.
\]
\end{lemma}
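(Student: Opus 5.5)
We will build the maps $h_j$ explicitly on the graph. Choose a spanning tree $T$ of $X$. Over $T$ the covering is trivial, so $q^{-1}(T)$ is $d$ disjoint copies of $T$. We label them $1,\ldots,d$; these labels are the fiber positions. Each edge $e$ of $X$ not in $T$ is a copy of $[0,1]$ whose endpoints lie in $T$. Its preimage consists of $d$ lifted arcs. Following the lifts from the endpoint $0$ to the endpoint $1$ induces a permutation $\pi_e$ of the positions.

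Start with the reference allocation $\mathfrak a$, where atom $j$ sits in position $p_j$. On $T$, define $h_j(x)$ to be the point of sheet $p_j$ over $x$. Then every $x\in T$ sees the complete allocation $\mathfrak a$. Since $\mathfrak a$ is the starting point of the admissible sequences, its defect is at most $a$, and by \eqref{eq:missing} and \eqref{eq:TV} the error at $x$ is at most $2a$.

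It remains to extend the $h_j$ across each non-tree edge $e$, where the two ends demand different allocations. At the endpoint $1$, the lifts continuing sheet $p_j$ have arrived in sheet $\pi_e(p_j)$. Continuity at that end therefore requires the allocation $\pi_e\circ\mathfrak a$, a position permutation of $\mathfrak a$. By hypothesis there is a finite sequence of single-atom moves $\mathfrak a=\mathfrak a_0\to\mathfrak a_1\to\cdots\to\mathfrak a_N$ ending at the required permuted allocation, read in whichever direction is compatible with the orientation of $e$. Subdivide $[0,1]$ into $N$ consecutive subintervals, one per move. On the $n$-th subinterval, every atom other than the moving one keeps its fixed lifted position, i.e.\ follows the corresponding lift of the subinterval. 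The moving atom, say $j$, must get from its old sheet to its new sheet over $x$.

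For this we use that $Y$ is connected, so any two points of $Y$ are joined by a path. Let $t_0<t_1$ be the ends of the subinterval, choose a small $\tau>0$, and set:
\begin{itemize}[label=$\cdot$]
\item on $[t_0,t_0+\tau]$, $h_j$ follows the old lift of the edge;
\item on the middle piece, $h_j$ traverses a path in $Y$ from the old lifted point to the point of the new sheet over $x(t_1-\tau)$;
\item on $[t_1-\tau,t_1]$, $h_j$ follows the new lift.
\end{itemize}
While atom $j$ is in transit, its mass may lie anywhere in $Y$, possibly off the fiber. By \eqref{eq:missing}, its contribution to the positions is at least $0$, so the error is bounded by the defect of the stationary atoms, which is at most $a$ by assumption. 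At all other points of the edge, the allocation is some complete $\mathfrak a_n$, and its defect is at most $a$ by the hypothesis on the sequence. Hence $\sup_x$ of the total-variation expression in \eqref{eq:TV} is at most $2a$.

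The main obstacle is the orientation and consistency bookkeeping. The allocation seen at a point $x$ must be read using the local sheet labelling near $x$, and this labelling changes across non-tree edges. We handle this by working edge by edge in the interior of each edge with labels transported along the edge from its $0$-end. The defect is invariant under relabelling of positions, so the estimate does not depend on the labelling chosen. We also need that each $h_j$ is continuous at the vertices. This holds because on every edge the construction starts and ends with the tree-defined values, with the permutation $\pi_e$ accounting for the end at $1$. Edges of $T$ need no moves, and loops are handled the same way as other non-tree edges.
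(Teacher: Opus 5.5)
Your construction is correct, but it handles the endpoint bookkeeping differently from the paper.

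\textbf{The paper's route.} The paper picks an arbitrary bijection $\phi_v$ between positions and $q^{-1}(v)$ at every vertex. On every edge, tree or not, both endpoint arrangements are then position permutations of the reference allocation. It therefore concatenates a reversed sequence (reference to initial arrangement) with a forward one (reference to final arrangement). This needs the remarks that relabelling positions and reversing a move preserve the stationary defect. The only topology it uses is path lifting over a single interval.

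\textbf{Your route.} You trivialize the covering over a spanning tree $T$, so $h_j$ is just a sheet selection on $T$ and moves are needed only on non-tree edges. On each such edge, in the labelling transported from the $0$-end, the starting arrangement is exactly $\mathfrak a$. A single forward sequence supplied by the hypothesis then suffices, and no reversal is needed. The cost is the standard fact that a covering over a tree is trivial. The gain is a cleaner construction in which all transport sits on the edges carrying the monodromy.

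\textbf{A bookkeeping correction.} With labels transported from the $0$-end, and $\pi_e$ defined by ``the lift starting in sheet $i$ ends in sheet $\pi_e(i)$'', atom $j$ must finish on the arc $\pi_e^{-1}(p_j)$. So the target is $\pi_e^{-1}\circ\mathfrak a$, not $\pi_e\circ\mathfrak a$; the latter is the starting arrangement in the labelling transported from the $1$-end. Since every position permutation of $\mathfrak a$ is reachable, this does not affect the argument. It also makes the hedge ``read in whichever direction'' unnecessary.
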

\begin{proof}
Let $V$ be the finite set of points of $X$ obtained by identifying the endpoints of the attached intervals. For each $v\in V$, choose a bijection
\[
\phi_v:\{1,\ldots,d\}\longrightarrow q^{-1}(v).
\]
If the $j$th atom occupies position $t_j$ in the reference allocation, prescribe
\[
h_j(v)=\phi_v(t_j)\qquad(v\in V).
\]
These choices are independent at different points of $V$. We show that they can be joined continuously along each attached interval without exceeding the stated error.

Consider one attached interval with parametrization $e:[0,1]\to X$, where $u=e(0)$ and $v=e(1)$ may be equal. Since $q$ is a covering map, there are $d$ continuous maps
\[
y_i:[0,1]\to Y\qquad(1\le i\le d)
\]
such that
\[
q(y_i(t))=e(t),\qquad
q^{-1}(e(t))=\{y_1(t),\ldots,y_d(t)\}
\quad(0\le t\le1).
\]
Here the indexing is fixed along the interval. Define permutations $\lambda_0,\lambda_1$ by
\[
y_i(0)=\phi_u(\lambda_0(i)),\qquad
y_i(1)=\phi_v(\lambda_1(i)).
\]
At the first endpoint, atom $j$ must use position
$i_j^{\mathrm{in}}=\lambda_0^{-1}(t_j)$; at the second it must use position
$i_j^{\mathrm{out}}=\lambda_1^{-1}(t_j)$. Thus the required change of positions is
\begin{equation}\label{eq:pi-orientation}
\pi_e=\lambda_1^{-1}\circ\lambda_0,
\qquad i_j^{\mathrm{out}}=\pi_e(i_j^{\mathrm{in}}).
\end{equation}
The initial and final arrangements are both position permutations of the reference allocation. Relabelling positions preserves all missing-mass estimates, and reversing a single-atom move preserves its stationary defect. We may therefore reverse a sequence from the reference allocation to the initial arrangement and follow it by a sequence from the reference allocation to the final arrangement. This gives the required finite sequence of admissible moves.

Choose disjoint closed subintervals in $(0,1)$, in the order of the moves. Between these subintervals, each atom follows one of the curves $y_i$. During a move on $[t_0,t_1]$, only one atom changes its position, say from $i$ to $\ell$. Keep every other atom on its current curve, and join $y_i(t_0)$ to $y_\ell(t_1)$ by a continuous path in $Y$, parametrized over $[t_0,t_1]$. Such a path exists because a connected finite graph is path connected; its image need not lie over the parametrized interval.

At $t=0$ and $t=1$, the constructed maps have the prescribed values $\phi_u(t_j)$ and $\phi_v(t_j)$ by \eqref{eq:pi-orientation}. This remains true when $u=v$. Thus their values agree at every identified endpoint. The constructions on the finitely many closed intervals therefore define continuous maps $h_j:X\to Y$.

Outside the motion subintervals, the missing fiber mass is at most $a$. During a move, the stationary atoms alone leave missing mass at most $a$. Adding the moving atom, whether it lies in the fiber or not, cannot increase this deficit. Equations \eqref{eq:missing} and \eqref{eq:TV} now give the asserted operator-norm bound.
\end{proof}

\begin{proof}[Proof of Theorem \ref{thm:main}]
A connected covering of degree $d\ge2$ has no continuous section, so Proposition \ref{prop:lower} gives the lower bound $2s/[d(ks+1)]$. Lemma \ref{lem:transport} supplies $m$ weights and a reference allocation with $a=s/[d(ks+1)]$. Lemma \ref{lem:realize} turns the corresponding admissible transport into continuous maps $h_1,\ldots,h_m$, whose convex combination achieves error at most $2a$. Equality follows, and the explicit construction attains the infimum.
\end{proof}

\begin{corollary}\label{cor:maxweight}
Suppose $m=kd+r$ with $k\ge1$, $1\le r\le d$, and let
$R=\sum_{j=1}^m\alpha_jC_{h_j}\in\E_m(Y,X)$ attain the distance in Theorem \ref{thm:main}. Then
\[
\max_{1\le j\le m}\alpha_j=\frac{d-r+1}{d[k(d-r+1)+1]}.
\]
\end{corollary}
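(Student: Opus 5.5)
The plan is to rerun the proof of Proposition~\ref{prop:lower} and check that each of its two alternatives is strict unless $M=a$. Put $a=s/[d(ks+1)]$ with $s=d-r+1$. Since $R$ attains the distance, Theorem~\ref{thm:main} gives $\|A_q-R\|=2a$. Let $M=\max_j\alpha_j$, attained at $j_0$.

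As in the proof of Proposition~\ref{prop:lower}, a connected covering of degree $d\ge2$ has no continuous section. Hence there is $x_0$ with $h_{j_0}(x_0)\notin q^{-1}(x_0)$. With $\nu=\sum_j\alpha_j\delta_{h_j(x_0)}$, we have the two bounds already established there:
\[
\tfrac12\|\mu_{x_0}-\nu\|_{\TV}\ge M
\qquad\text{and}\qquad
\tfrac12\|\mu_{x_0}-\nu\|_{\TV}\ge s\left(\tfrac1d-kM\right)_+ .
\]
The first holds because the mass of $\nu$ outside the fiber is at least $M$, via \eqref{eq:missing}. The second comes from Lemma~\ref{lem:balancing}. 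By \eqref{eq:TV}, both left-hand sides are at most $\tfrac12\|A_q-R\|=a$.

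Suppose first that $M>a$. Then the first bound gives $\|A_q-R\|\ge 2M>2a$, a contradiction.

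Suppose instead that $M<a$. Here the hypothesis $k\ge1$ is essential: it makes $t\mapsto s(1/d-kt)$ strictly decreasing. Using \eqref{eq:ab}, $1/d-ka=b>0$, so the positive part is inactive near $t=a$. Therefore
\[
s\left(\tfrac1d-kM\right)_+\ge s\left(\tfrac1d-kM\right)>s\left(\tfrac1d-ka\right)=sb=a ,
\]
which gives $\|A_q-R\|>2a$, again a contradiction. In the case $r=d$ we have $s=1$ and $b=0$ formally, but still $1/d-ka=1/m>0$, so the same computation applies.

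Hence $M=a$, which is the claimed formula. The only delicate point is the strictness in the case $M<a$. It fails when $k=0$, where the lower bound $s/d$ is independent of $M$, and that is why $k\ge1$ is assumed.
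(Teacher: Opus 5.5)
Your proposal is correct and is essentially the paper's proof: you use the two lower bounds $\tfrac12\|A_q-R\|\ge\max\{M,\,s(1/d-kM)_+\}$ from the proof of Proposition~\ref{prop:lower}, and you rule out $M>a$ and $M<a$ (the latter needing $k\ge1$), exactly as the paper does. One harmless slip: for $r=d$ the quantity $b=1/[d(ks+1)]$ equals $1/m$, not $0$, so the identities $ka+b=1/d$ and $sb=a$ hold for every $r$; your direct check $1/d-ka=1/m>0$ already covers that case.
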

\begin{proof}
Write $M=\max_j\alpha_j$, $s=d-r+1$ and $a=s/[d(ks+1)]$. The proof of Proposition \ref{prop:lower} gives
\[
\frac12\|A_q-R\|\ge\max\left\{M,\ s\left(\frac1d-kM\right)_+\right\}.
\]
The optimal distance is $2a$. If $M>a$, the first term exceeds $a$. If $M<a$, then $k\ge1$ and $s(1/d-kM)>s(1/d-ka)=a$. Hence $M=a$.
\end{proof}

\section{Equal-weight approximation}
We now prove Theorem \ref{thm:eq}. The equal-weight constraint changes the exact error.

\begin{lemma}\label{lem:eqlower}
Let $q:Y\to X$ be a covering of degree $d$ between compact Hausdorff spaces without a continuous section. For $m=kd+r$ with $1\le r\le d$ and $s=d-r+1$,
\[
\dist(A_q,\E_m^{\mathrm{eq}}(Y,X))\ge\frac{2rs}{dm}.
\]
\end{lemma}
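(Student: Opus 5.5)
We follow the proof of Proposition~\ref{prop:lower}, now with all weights equal to $1/m$. Fix $R=\frac1m\sum_{j=1}^m C_{h_j}\in\E_m^{\mathrm{eq}}$. Since $q$ has no continuous section, $q\circ h_{1}\neq\mathrm{id}_X$. Hence there is $x_0$ with $h_{1}(x_0)\notin q^{-1}(x_0)$. Put $\nu=\frac1m\sum_j\delta_{h_j(x_0)}$, list the fiber as $y_1,\dots,y_d$, and let $c_i$ be the number of indices $j\ne 1$ with $h_j(x_0)=y_i$. Then $\nu(\{y_i\})=c_i/m$ and $\sum_i c_i\le m-1$. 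By \eqref{eq:missing} and \eqref{eq:TV},
\[
\tfrac12\|A_q-R\|\ge \sum_{i=1}^d\Bigl(\frac1d-\frac{c_i}{m}\Bigr)_+ .
\]

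It remains to minimize the right-hand side over nonnegative integers $c_i$ with $\sum c_i\le m-1$. We use Lemma~\ref{lem:balancing} with $M=1/m$, this time keeping all terms. The function $t\mapsto(1/d-t/m)_+$ is convex and nonincreasing, so the balancing argument applies. Write $m-1=kd+(r-1)$. The minimum is then attained with $r-1$ entries equal to $k+1$ and $s=d-r+1$ entries equal to $k$.

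We check the sizes of the terms. Entries equal to $k$ contribute $\frac1d-\frac km=\frac{m-kd}{dm}=\frac r{dm}>0$ each. Entries equal to $k+1$ have $\frac{k+1}m\ge\frac1d$, since $(k+1)d\ge kd+r$, so they contribute $0$. The minimum is therefore exactly $s\cdot\frac r{dm}$, which gives $\|A_q-R\|\ge\frac{2rs}{dm}$. Since $R$ was arbitrary, this is the claimed lower bound.

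The only point needing care is the balancing step. We must check that the convexity argument of Lemma~\ref{lem:balancing} yields the balanced configuration as the exact minimizer, not merely a lower bound after discarding terms. This holds because moving a unit from $c_i\ge c_j+2$ never increases the sum. Every configuration with sum $m-1$ reduces by such moves to the balanced one. Configurations with smaller sum give a larger value, since the summands are nonincreasing in each $c_i$.

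Note that, unlike in Proposition~\ref{prop:lower}, no case split on $M$ is needed: all weights equal $1/m$. The off-fiber atom enters only through the constraint $\sum c_i\le m-1$.
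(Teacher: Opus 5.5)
Your proposal is correct and follows the paper's proof essentially step for step. You choose $x_0$ where $h_1$ leaves the fiber, count the other $m-1$ atoms on the fiber, and apply Lemma~\ref{lem:balancing} with $M=1/m$ to get missing mass at least $s(1/d-k/m)=rs/(dm)$; your check that the entries equal to $k+1$ contribute exactly zero is valid but not needed, since the lower bound only requires dropping those nonnegative terms.
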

\begin{proof}
Fix $R=m^{-1}\sum_{j=1}^m C_{h_j}$. Since $q\circ h_1$ is not the identity, there is $x_0$ with $h_1(x_0)\notin q^{-1}(x_0)$. Let $c_i$ count the indices $j\ge2$ for which $h_j(x_0)$ is the $i$th point of the fiber. Then $\sum_i c_i\le m-1=kd+r-1$. By Lemma \ref{lem:balancing} with $M=1/m$, the missing fiber mass is at least
\[
s\left(\frac1d-\frac{k}{m}\right)=\frac{rs}{dm}.
\]
Equations \eqref{eq:TV} and \eqref{eq:missing} give the assertion.
\end{proof}

\begin{lemma}\label{lem:eqtransport}
Let $d\ge2$ and $m=kd+r$ with $k\ge0$ and $1\le r\le d$. There are $m$ labelled atoms, each of weight $1/m$, and a reference allocation in $d$ positions such that every permutation of the fiber positions applied to the reference allocation can be obtained by finitely many single-atom moves. During each move the total missing mass of stationary atoms does not exceed
\[
a_{\mathrm{eq}}=\frac{r(d-r+1)}{dm}.
\]
\end{lemma}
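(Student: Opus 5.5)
The plan is to mimic the proof of Lemma~\ref{lem:transport}, now with all atoms of weight $1/m$. For the reference allocation I would place $k$ atoms in every position and the remaining $r$ atoms, the \emph{extra} atoms, in $r$ distinct positions. This leaves $s-1=d-r$ positions with only their $k$ baseline atoms. A position with $k$ atoms is short by $1/d-k/m=r/(dm)$. A position with $k+1$ atoms has mass $(k+1)/m\ge 1/d$, since $m=kd+r\le (k+1)d$. Hence the defect of the reference allocation is $(s-1)r/(dm)<a_{\mathrm{eq}}$.

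First step: moving extra atoms. I move an extra atom from its position to a position containing only $k$ atoms. While it is removed there are exactly $s$ positions with $k$ atoms and all others have at least $1/d$. The stationary defect is therefore $s\cdot r/(dm)=a_{\mathrm{eq}}$, and after arrival the defect drops back to $(s-1)r/(dm)$. As in Lemma~\ref{lem:transport}, combining atom--vacancy exchanges with three-move swaps through a vacancy realizes every placement of the $r$ labelled extra atoms in distinct positions. This works when $r<d$, so that a vacancy exists.

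Second step: swapping baseline labels. To exchange baseline atoms $A$ in position $i$ and $B$ in position $j$, I first arrange the extra atoms so that $i$ holds an extra atom and $j$ does not. I then move $A:i\to j$ and afterwards $B:j\to i$.
\begin{itemize}
\item While $A$ is removed, $i$ has $k$ atoms and the $s-1$ vacancies (including $j$) are short. That gives $s$ short positions, each short by $r/(dm)$.
\item After $A$ arrives, $j$ has $k+1$ atoms. While $B$ is removed, $j$ has $k$ atoms, and $i$ now has $k$ atoms as well. Together with the other $s-2$ vacancies this again gives exactly $s$ short positions.
\end{itemize}
So the defect is $a_{\mathrm{eq}}$ during both moves. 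Afterwards I restore the extra atoms. Following the cycles of the desired permutation as in Lemma~\ref{lem:transport}, these transpositions correct all baseline labels, and for $k=0$ this step is empty.

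The case $r=d$ is handled as in Lemma~\ref{lem:transport}. Here every position has $k+1$ atoms and there is no vacancy. A transposition is $A:i\to j$, then $B:j\to i$, and each single removal creates one position short by $1/m=d/(dm)=a_{\mathrm{eq}}$ (here $s=1$). This also covers $k=0$, $r=d$, where each position holds one atom.

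The main obstacle is bookkeeping rather than an inequality. I must verify that exactly $s$ positions are deficient during each move, each short by exactly $r/(dm)$, and that the $(k+1)$-atom positions are never short. The only degenerate case needing a separate check is $r=d-1$, where there is a single vacancy. There the swap $A:i\to j$ must use that vacancy as $j$, and the count $s-2=0$ of other vacancies is consistent. With this lemma, Lemma~\ref{lem:realize} and Lemma~\ref{lem:eqlower} give Theorem~\ref{thm:eq}.
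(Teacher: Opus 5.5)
Your proof is correct and follows essentially the paper's approach. It uses the same reference allocation, with $r$ positions holding $k+1$ atoms and $d-r$ positions holding $k$ atoms, and the same count: any move from a $(k+1)$-position to a $k$-position leaves exactly $s$ positions short by $r/(dm)$, and such moves generate all position permutations via transpositions.

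The only difference is organizational. The paper drops the extra/baseline distinction, which carries no weight information once all atoms weigh $1/m$, and writes down transpositions directly for the full--low, full--full and low--low cases. You instead reuse the two-stage scheme of Lemma~\ref{lem:transport}. In both versions every move is full-to-low, and that is exactly what keeps the stationary defect at $a_{\mathrm{eq}}$.
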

\begin{proof}
Suppose first $r<d$. Give $r$ positions $k+1$ atoms each (full positions) and the remaining $d-r$ positions $k$ atoms each (low positions). A permitted move takes one atom from a full position to a low position. During such a move exactly $d-r+1$ positions have $k$ stationary atoms, each short of $1/d-k/m=r/(dm)$, so the stationary defect is $a_{\mathrm{eq}}$. After arrival the source is low and the destination is full.

We show that labelled atoms $A,B$ in distinct positions can be exchanged while restoring all position occupancies. If $A$ is in a full position $i$ and $B$ in a low position $j$, use $A:i\to j$, then $B:j\to i$. If $A,B$ are in distinct full positions $i,j$, choose a low position $v$ and use $A:i\to v$, $B:j\to i$, $A:v\to j$. If $A,B$ are in distinct low positions $i,j$, choose a full position $h$ and one of its atoms $Z$, and use
\[
Z:h\to i,\quad A:i\to j,\quad B:j\to h,\quad B:h\to i,\quad Z:i\to h.
\]
When $k=0$, low positions contain no atoms and the last case does not occur. In every case, each move is from a currently full position to a currently low position. The initial occupancies are restored, and only the positions of $A$ and $B$ are exchanged. Such transpositions generate all required labelled allocations.

If $r=d$, then $m=(k+1)d$; give all atoms weight $1/m$ and place $k+1$ in each position. Exchange atoms $A$ in $i$ and $B$ in $j$ by moving $A:i\to j$ and then $B:j\to i$. The stationary defect during each move is $1/m=a_{\mathrm{eq}}$ and these transpositions generate every permutation of the fiber positions.
\end{proof}

\begin{proof}[Proof of Theorem \ref{thm:eq}]
Lemma \ref{lem:eqlower} gives the lower bound. Lemma \ref{lem:eqtransport} supplies a reference allocation of equally weighted atoms whose permitted moves have defect at most $a_{\mathrm{eq}}=rs/(dm)$. Lemma \ref{lem:realize} gives continuous maps $h_1,\ldots,h_m:X\to Y$ satisfying
\[
\left\|A_q-\frac1m\sum_{j=1}^m C_{h_j}\right\|\le2a_{\mathrm{eq}}.
\]
The bounds agree, and the infimum is attained.
\end{proof}

\section{Consequences and examples}
\begin{corollary}\label{cor:closure}
For every connected finite graph covering $q:Y\to X$ of degree $d\ge2$,
\[
A_q\in\overline{\operatorname{co}}^{\|\cdot\|}\operatorname{Ext}\M(Y,X),
\qquad A_q\notin\operatorname{co}\operatorname{Ext}\M(Y,X).
\]
The operator $A_q$ is not compact.
\end{corollary}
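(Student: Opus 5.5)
The corollary has three assertions, and I will take them in order.

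\textbf{The closure statement.} I apply Theorem~\ref{thm:main} with $m\to\infty$. Every $R\in\E_m(Y,X)$ is a finite convex combination of operators $C_h$, and these are extreme by Lemma~\ref{lem:extremes}. So $\dist(A_q,\operatorname{co}\operatorname{Ext}\M(Y,X))\le 2s/[d(ks+1)]$ for every $m$. Write $m=kd+r$ with $s=d-r+1\in\{1,\dots,d\}$. Then
\[
\frac{2s}{d(ks+1)}\le \frac{2}{dk},
\]
which tends to $0$ as $m\to\infty$. Hence $A_q$ lies in the norm closure of the convex hull of the extreme points.

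\textbf{The non-membership statement.} This is the argument from the introduction, combined with the remark after Proposition~\ref{prop:lower} that a connected covering of degree $d\ge2$ has no continuous section. Alternatively, Proposition~\ref{prop:lower} gives a strictly positive lower bound for each fixed $m$. Any element of $\operatorname{co}\operatorname{Ext}$ lies in some $\E_m$, so it sits at distance at least $2s/[d(ks+1)]>0$ from $A_q$.

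\textbf{Non-compactness.} I expect this to be the only step needing a new idea. I would exhibit a bounded sequence $f_n$ in $\C(Y)$ such that $(A_qf_n)$ has no norm-convergent subsequence.

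Since $X$ is a finite graph, it contains an edge interval $e$, which can be taken inside a small open set $U$ that is evenly covered. Then $q^{-1}(U)$ is a disjoint union of $d$ sheets $U_1,\dots,U_d$, each mapped homeomorphically onto $U$. Choose points $x_n\in U$ converging to a point of $U$, with disjoint neighbourhoods $W_n\subset U$. Let $f_n$ be a bump of height $d$ supported in the part of $U_1$ lying over $W_n$, with $0\le f_n\le d$ and $f_n=d$ at the preimage of $x_n$ in $U_1$.

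Then $A_qf_n=g_n\circ(q|_{U_1})^{-1}$, where $g_n=f_n|_{U_1}/d$; this is a bump with $\|A_qf_n\|=1$ and disjoint support. Hence $\|A_qf_n-A_qf_m\|=1$ for $n\ne m$, while $\|f_n\|\le d$. So $A_q$ maps a bounded set onto a set that is not totally bounded, and $A_q$ is not compact.

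An alternative route would argue that a compact operator has a norm-compact (equicontinuous) set of kernels; I would avoid it in favour of the direct bump-function argument above. The only care needed is to state the evenly covered neighbourhood correctly. Everything else is immediate from the earlier results.
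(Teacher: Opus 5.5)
Your proof is correct. The closure and non-membership parts match the paper's argument: the attained upper bound of Theorem~\ref{thm:main} tends to $0$, and the lower bound (equivalently, the absence of a continuous section) is positive for each fixed $m$.

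For non-compactness you take a different route. You place bumps on a single sheet $U_1$ over an evenly covered arc and check directly that their images under $A_q$ are $1$-separated bumps on $X$. The paper instead uses the factorization $A_q q^*=I_{\C(X)}$, where $q^*g=g\circ q$ is an isometric embedding. If $A_q$ were compact, the identity on the infinite-dimensional space $\C(X)$ would be compact.

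The factorization is shorter because it needs no local trivialization of the covering. Within your own sequence-based framework you could take $f_n=g_n\circ q$, with $g_n$ disjoint unit bumps on an edge of $X$. Since $A_q(g_n\circ q)=g_n$, this avoids evenly covered sets and the height-$d$ normalization altogether. Your construction works, but it relies on local covering structure that the argument does not require.

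One small point of wording: a whole edge need not lie in an evenly covered set (a loop edge, for instance). A small open subarc of an edge's interior does, and that is all your construction uses.
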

\begin{proof}
The exact distance in \eqref{eq:main} is positive for every finite $m$ and tends to zero. The pullback $q^*:\C(X)\to\C(Y)$, $(q^*g)(y)=g(q(y))$, is an isometric embedding and $A_qq^*=I_{\C(X)}$. Since $X$ has a cycle, $\C(X)$ is infinite dimensional. If $A_q$ were compact, its composition with $q^*$ would be compact, contrary to the identity being noncompact.
\end{proof}

\begin{corollary}\label{cor:eqopt}
Under the assumptions of Theorem \ref{thm:main},
\[
\frac{\dist(A_q,\E_m^{\mathrm{eq}})}{\dist(A_q,\E_m)}
=\frac{r[k(d-r+1)+1]}{kd+r}.
\]
Thus equal weights attain the unrestricted optimal error precisely when $k=0$, $r=1$, or $r=d$. In particular, they are strictly suboptimal if $d\ge3$, $k\ge1$ and $2\le r\le d-1$. Moreover, if $r=1$ or $r=d$, every optimal representation with $m$ labelled coefficients has all coefficients equal to $1/m$.
\end{corollary}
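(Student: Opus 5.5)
The ratio formula comes straight from Theorems~\ref{thm:main} and \ref{thm:eq}:
\[
\frac{2rs/(dm)}{2s/[d(ks+1)]}=\frac{r(ks+1)}{m}=\frac{r[k(d-r+1)+1]}{kd+r}.
\]
Equal weights are optimal exactly when this ratio is $1$, that is, when $r(ks+1)=kd+r$, or $rks=kd$. If $k=0$ this holds. If $k\ge1$ it reduces to $r(d-r+1)=d$, i.e. $(r-1)(d-r)=0$, so $r=1$ or $r=d$. For $d\ge3$, $k\ge1$ and $2\le r\le d-1$ both factors are positive. Then the ratio exceeds $1$ and equal weights are strictly suboptimal.

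For the last assertion, I take an optimal $R=\sum_{j=1}^m\alpha_jC_{h_j}$ with $r\in\{1,d\}$.

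\emph{Case $r=1$.} Here $s=d$, and the optimal distance is $2a$ with $a=1/(kd+1)=1/m$. If $k\ge1$, Corollary~\ref{cor:maxweight} gives $\max_j\alpha_j=a=1/m$. Since the $m$ weights sum to $1$, each must then equal $1/m$. If $k=0$, then $m=1$ and the claim is trivial.

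\emph{Case $r=d$.} Here $s=1$ and $m=(k+1)d$. The optimal value is $2a$ with $a=1/(d(k+1))=1/m$. When $k\ge1$, Corollary~\ref{cor:maxweight} again gives $M=1/m$, and summing forces all $\alpha_j=1/m$. When $k=0$, we have $m=d$, and Corollary~\ref{cor:maxweight} is not available, so I redo its argument. The proof of Proposition~\ref{prop:lower} gives $\tfrac12\|A_q-R\|\ge M$, hence $M\le 1/d=1/m$. Together with $\sum_j\alpha_j=1$ over $d$ terms, this forces all weights to equal $1/d$.

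The only delicate point is these $k=0$ boundary cases, where Corollary~\ref{cor:maxweight} does not apply directly. There the lower bound from Proposition~\ref{prop:lower} must be used in place of the corollary.
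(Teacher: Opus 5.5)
Your proof is correct and follows the paper's argument: divide the two exact distances, factor $r(d-r+1)-d=(r-1)(d-r)$, and use the bound $\|A_q-R\|\ge 2\max_j\alpha_j$ from the proof of Proposition~\ref{prop:lower}. The paper applies that last bound uniformly: the optimal error is $2/m$ whenever $r\in\{1,d\}$, so it gives $\max_j\alpha_j\le 1/m$ for every $k$, and the detour through Corollary~\ref{cor:maxweight} and the separate $k=0$ cases are unnecessary.
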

\begin{proof}
Divide \eqref{eq:eqdist} by \eqref{eq:main}. Equality of errors is equivalent to $k[r(d-r+1)-d]=0$, and $r(d-r+1)-d=(r-1)(d-r)$. For $r=1$ or $r=d$, the unrestricted optimal error is $2/m$. Proposition \ref{prop:lower} gives $\|A_q-R\|\ge2\max_j\alpha_j$ for every representation; thus every optimal $m$-term representation has $\max_j\alpha_j\le1/m$ and all coefficients must equal $1/m$.
\end{proof}

\begin{corollary}\label{cor:asymptotic}
For every fixed $d\ge2$,
\[
\lim_{m\to\infty}m\dist(A_q,\E_m)=2.
\]
If $m=kd+r$ with fixed $1\le r\le d$, then
\[
m\dist(A_q,\E_m^{\mathrm{eq}})=\frac{2r(d-r+1)}d.
\]
Consequently,
\[
\liminf_{m\to\infty}m\dist(A_q,\E_m^{\mathrm{eq}})=2,
\quad
\limsup_{m\to\infty}m\dist(A_q,\E_m^{\mathrm{eq}})=\frac2d\left\lfloor\frac{(d+1)^2}{4}\right\rfloor.
\]
The normalized equal-weight errors converge precisely when $d=2$.
\end{corollary}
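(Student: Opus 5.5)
The plan is to read Corollary~\ref{cor:asymptotic} off the closed formulas in Theorems~\ref{thm:main} and~\ref{thm:eq}. No further approximation argument is needed; everything reduces to elementary estimates in $k$ and a maximization over the residue $r$.

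For the unrestricted errors, write $m=kd+r$ with $s=d-r+1$. Then
\[
m\dist(A_q,\E_m)=\frac{2s(kd+r)}{d(ks+1)}.
\]
As $m\to\infty$ we have $k\to\infty$, while $r$ and $s$ range only over the finite sets $\{1,\ldots,d\}$. The quotient is
\[
\frac{2(skd+sr)}{skd+d},
\]
whose numerator and denominator differ by $2(sr-d)k^0$-order terms that are bounded. Its limit is therefore $2$, uniformly in $r$. This proves the first assertion.

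For the equal-weight errors, Theorem~\ref{thm:eq} gives directly
\[
m\dist(A_q,\E_m^{\mathrm{eq}})=\frac{2r(d-r+1)}{d},
\]
which depends only on the residue $r$. Along each residue class (fixed $r$, $k\to\infty$) the sequence is constant. Hence the set of subsequential limits is exactly
\[
\Bigl\{\tfrac{2r(d-r+1)}{d}:1\le r\le d\Bigr\},
\]
since every residue class is infinite. The $\liminf$ is the minimum of $r(d-r+1)$ over $1\le r\le d$. This is a concave quadratic in $r$, so the minimum occurs at an endpoint: at $r=1$ or $r=d$ it equals $d$, and the $\liminf$ is $2$.

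The $\limsup$ is the maximum of $r(d+1-r)$ over the integers $1\le r\le d$. The real maximizer is $(d+1)/2$. Taking $r=\lfloor (d+1)/2\rfloor$ gives the integer maximum $\lfloor (d+1)/2\rfloor\lceil (d+1)/2\rceil=\lfloor (d+1)^2/4\rfloor$. To justify this, I would split on the parity of $d+1$ and check it directly in each case.

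Convergence of the normalized equal-weight errors holds precisely when all residue values coincide, that is, when $\lfloor(d+1)^2/4\rfloor=d$. For $d=2$ we get $\lfloor 9/4\rfloor=2$, so the sequence is constant and converges. For $d\ge3$, taking $r=2$ gives $2(d-1)>d$, so the $\limsup$ strictly exceeds $2$ and there is no limit.

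The only step requiring any care is identifying the integer maximum $\lfloor(d+1)^2/4\rfloor$. Even that is routine, so I expect no real obstacle.
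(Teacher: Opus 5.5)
Your proposal is correct and follows the paper's proof: both read the corollary off the closed formulas of Theorems~\ref{thm:main} and~\ref{thm:eq}, use that $2ms/[d(ks+1)]\to2$ uniformly over the finitely many residues, and take the minimum $d$ and maximum $\lfloor(d+1)^2/4\rfloor$ of $r(d-r+1)$ over $1\le r\le d$, with every residue recurring for arbitrarily large $m$. One small wording fix: it is the numerator and \emph{twice} the denominator that differ by the bounded quantity $2(sr-d)$, and this together with $skd\to\infty$ gives the limit $2$.
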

\begin{proof}
For $m=kd+r$ and $s=d-r+1$, the expression $2ms/[d(ks+1)]$ tends to $2$, uniformly over the finitely many remainders. The second identity follows from Theorem \ref{thm:eq}. The minimum and maximum of $r(d-r+1)$ over $1\le r\le d$ are $d$ and $\lfloor(d+1)^2/4\rfloor$; every residue occurs for arbitrarily large $m$. These values coincide only if $d=2$.
\end{proof}

\begin{corollary}\label{cor:penalty}
For every $d\ge2$,
\[
\sup_{m\ge1}\frac{\dist(A_q,\E_m^{\mathrm{eq}})}{\dist(A_q,\E_m)}
=\frac1d\left\lfloor\frac{(d+1)^2}{4}\right\rfloor.
\]
The supremum is asymptotic to $d/4$ as $d\to\infty$.
\end{corollary}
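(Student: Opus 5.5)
We would prove Corollary~\ref{cor:penalty} by maximizing the ratio in Corollary~\ref{cor:eqopt} directly. Write $m=kd+r$ with $k\ge0$, $1\le r\le d$, and $s=d-r+1$. The quantity to control is
\[
\rho(k,r)=\frac{r(ks+1)}{kd+r}.
\]
For fixed $r$, we regard $\rho$ as a function of the real variable $k\ge0$. This is a M\"obius function of $k$, so it is monotone. Its limit as $k\to\infty$ is $rs/d$, and its value at $k=0$ is $1$.

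Monotonicity follows from the sign of $rs\cdot r-r\cdot d=r(rs-d)$. Since $rs-d=(r-1)(d-r)\ge0$, $\rho$ is nondecreasing in $k$. We therefore have
\[
\rho(k,r)\le \frac{rs}{d}
\]
for every $k$. The supremum over $k$ for fixed $r$ equals $rs/d$. It is attained at $k=0$ when $rs=d$, and it is a limit as $k\to\infty$ otherwise.

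Hence the supremum over all $m$ is
\[
\max_{1\le r\le d}\frac{r(d-r+1)}{d}.
\]
The proof of Corollary~\ref{cor:asymptotic} already notes that this maximum is $\frac1d\lfloor (d+1)^2/4\rfloor$. To check it, observe that the parabola $r(d+1-r)$ is maximized at $r=\lfloor (d+1)/2\rfloor$ or $r=\lceil (d+1)/2\rceil$. At these points its value is $\lfloor (d+1)/2\rfloor\lceil (d+1)/2\rceil=\lfloor (d+1)^2/4\rfloor$.

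For the asymptotic statement, note that $\lfloor (d+1)^2/4\rfloor/d$ lies between $((d+1)^2/4-1)/d$ and $(d+1)^2/(4d)$. Both bounds are asymptotic to $d/4$.

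The only delicate point is the claim that the supremum equals the bound, because it need not be attained. We handle it by the explicit limit $k\to\infty$ at $r=\lfloor (d+1)/2\rfloor$. When $d=2$ the bound equals $1$ and is attained at $k=0$. No real obstacle is expected.
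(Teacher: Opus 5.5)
Your proposal is correct and follows the paper's proof: you take the ratio $r(ks+1)/(kd+r)$ from Corollary~\ref{cor:eqopt}, show it is nondecreasing in $k$ because $rs-d=(r-1)(d-r)\ge0$, pass to the limit $rs/d$, and maximize over $r$. Your explicit checks that $\max_{1\le r\le d} r(d-r+1)=\lfloor (d+1)^2/4\rfloor$ and that this quotient by $d$ is asymptotic to $d/4$ fill in details the paper leaves implicit.
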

\begin{proof}
For fixed $r$, let $s=d-r+1$. The ratio $r(ks+1)/(kd+r)$ is nondecreasing in $k\ge0$ since $rs-d=(r-1)(d-r)\ge0$. Its limit is $rs/d$. Maximizing over $r$ yields the claim.
\end{proof}

\begin{example}\label{ex:covering-permutations}
Let $X$ consist of two circles joined at one point. Consider a connected covering of degree $3$ for which one circuit around the first circle exchanges two fiber points and one circuit around the second circle cyclically permutes all three. Theorem \ref{thm:main} applies without any further information about these permutations. For $d=3$ and $m=5=1\cdot3+2$,
\[
\dist(A_q,\E_5(Y,X))=\frac{2\cdot2}{3(1\cdot2+1)}=\frac49.
\]
The same distance holds for every connected covering of degree $3$ between finite connected graphs.
\end{example}

\begin{example}\label{ex:penalty}
If $d=5$ and $m=8=1\cdot5+3$, then
\[
\dist(A_q,\E_8)=\frac3{10},\qquad
\dist(A_q,\E_8^{\mathrm{eq}})=\frac9{20}.
\]
The equal-weight restriction multiplies the optimal error by $3/2$ for every connected covering of degree $5$.
\end{example}

\begin{remark}\label{rem:connected}
The connectedness of the covering space is essential to the conclusion as stated. If a covering has $d$ global continuous sections whose images exhaust each fiber, its averaging operator is already the average of the $d$ corresponding composition operators, so its distance to $\E_m$ is zero for $m\ge d$, unlike \eqref{eq:main}. Proposition \ref{prop:lower} only assumes the absence of a global section.
\end{remark}

\section*{Statements and Declarations}
\noindent\textbf{Funding.} The authors received no financial support for this research.\par
\noindent\textbf{Competing interests.} The authors declare that they have no competing interests.\par
\noindent\textbf{Data availability.} No datasets were generated or analysed during this theoretical study.\par


\begin{thebibliography}{99}
\footnotesize\setlength{\itemsep}{-1pt}\setlength{\baselineskip}{9.2pt}
\bibitem{BLP65} Blumenthal, R.M., Lindenstrauss, J., Phelps, R.R.: Extreme operators into $C(K)$. Pacific J. Math. 15, 747--756 (1965). \url{https://doi.org/10.2140/pjm.1965.15.747}.
\bibitem{Brown66} Brown, J.R.: Approximation theorems for Markov operators. Pacific J. Math. 16, 13--23 (1966). \url{https://doi.org/10.2140/pjm.1966.16.13}.
\bibitem{CK23} Cimasoni, D., Kassel, A.: Graph coverings and twisted operators. Algebraic Combin. 6(1), 75--94 (2023). \url{https://doi.org/10.5802/alco.258}.
\bibitem{DuruIlter18} Duru, H., \.{I}lter, S.: Weak Markov operators. Filomat 32(15), 5453--5457 (2018). \url{https://doi.org/10.2298/FIL1815453D}.
\bibitem{Grz90} {\fontencoding{T1}\selectfont Grz\k{a}\'slewicz}, R.: Approximation theorems for positive operators on $L^p$-spaces. J. Approx. Theory 63(2), 123--136 (1990). \url{https://doi.org/10.1016/0021-9045(90)90097-A}.
\bibitem{HNR09} Hadwin, D., Nordgren, E., Radjavi, H.: Stochastic operators and extreme points. Oper. Matrices 3(2), 227--233 (2009). \url{https://doi.org/10.7153/oam-03-13}.
\bibitem{Iwanik80} Iwanik, A.: Approximation theorems for stochastic operators. Indiana Univ. Math. J. 29, 415--425 (1980); erratum 30, 319 (1981). \url{https://doi.org/10.1512/iumj.1980.29.29030}.
\bibitem{Kim68} Kim, C.-W.: Uniform approximation of doubly stochastic operators. Pacific J. Math. 26, 515--527 (1968). \url{https://doi.org/10.2140/pjm.1968.26.515}.
\bibitem{Kim72a} Kim, C.-W.: Approximation theorems for Markov operators. Z. Wahrscheinlichkeitstheorie verw. Gebiete 21, 207--214 (1972). \url{https://doi.org/10.1007/BF00538394}.
\bibitem{Kim72b} Kim, C.-W.: Approximations of doubly substochastic operators. Michigan Math. J. 19, 83--95 (1972). \url{https://doi.org/10.1307/mmj/1029000801}.
\bibitem{Kim72c} Kim, C.-W.: Approximations of positive contractions on $L^\infty[0,1]$. Z. Wahrscheinlichkeitstheorie verw. Gebiete 24, 335--337 (1972). \url{https://doi.org/10.1007/BF00679138}.
\bibitem{Kim75} Kim, C.-W.: Approximations of positive contractions on $L^\infty$. II. Math. Ann. 217, 211--220 (1975). \url{https://doi.org/10.1007/BF01436172}.
\bibitem{MP70} Morris, P.D., Phelps, R.R.: Theorems of Krein--Milman type for certain convex sets of operators. Trans. Amer. Math. Soc. 150, 183--200 (1970). \url{https://doi.org/10.1090/S0002-9947-1970-0262804-3}.
\bibitem{Phelps63} Phelps, R.R.: Extreme positive operators and homomorphisms. Trans. Amer. Math. Soc. 108, 265--274 (1963). \url{https://doi.org/10.1090/S0002-9947-1963-0156224-6}.
\bibitem{Phelps70} Phelps, R.R.: Theorems of Krein--Milman type for certain convex sets of functions and operators. Ann. Inst. Fourier (Grenoble) 20(2), 45--54 (1970). \url{https://doi.org/10.5802/aif.351}.
\end{thebibliography}
\end{document}